\colorlet{prettygreen}{ForestGreen!60!LimeGreen}
\def\@defaultbiblabelstyle#1{[#1]}
\tikzset{vtx/.style={circle, fill, inner sep=1.5pt}}
\tikzset{openvtx/.style={circle, draw, inner sep=1.5pt}}
\newtheorem{theorem}{Theorem}[section]
\newtheorem{lemma}[theorem]{Lemma}
\newtheorem{proposition}[theorem]{Proposition}
\newtheorem{corollary}[theorem]{Corollary}
\newtheorem*{claim*}{Claim}
\theoremstyle{definition}
\newtheorem{definition}[theorem]{Definition}
\newtheorem{example}[theorem]{Example}
\theoremstyle{remark}
\newtheorem{remark}[theorem]{Remark}
\crefname{claim}{Claim}{Claims}
\newlist{homtenum}{enumerate}{1}
\setlist[homtenum,1]{leftmargin=36pt}
\DeclareMathOperator{\id}{id}
\title{On Matsushita $\pi_1^2$ discrete fundamental groups}
\author{Mike Krebs}
\address{Department of Mathematics, California State University --- Los
Angeles}
\email{mkrebs@calstatela.edu}
\author{Alan Pan}
\author{Anand Prakash}
\address{Department of Mathematics, California State University --- Los
Angeles}
\email{aprakas3@calstatela.edu}
\subjclass[2020]{05C25, 05C38}
\let\mytitle\@title
\let\myauthor\@author
\begin{document}

\begin{abstract}The Matsushita fundamental groups of a graph $X$, denoted $\pi_1^r(X)$, are certain discrete versions of the fundamental group for topological spaces.  For $r=2$, these groups have a nice combinatorial description, due to Sankar.  In this paper we prove two results about $\pi_1^2$.  First, we prove a Seifert-van Kampen-type theorem.  Similar results have previously been obtained by Barcelo, et al. (and strengthened by Kapulkin and Mavinkurve) for a different notion of discrete fundamental group.  Second, we prove that an arbitrary group $G$ can be realized as $\pi_1^2(X)$ for some graph $X$. Our construction works equally well for the aforementioned alternate discrete fundamental group $A_1(X)$, and our second result thus also provides an entirely different method of proof for a theorem of Kapulkin and Mavinkurve.
\end{abstract}


\maketitle

\section{Introduction}

The fundamental group is a well-known and much-studied invariant of topological spaces.  There are various techniques to compute $\pi_1(X)$ for a given topological space $X$.  For example, the Seifert-van Kampen theorem allows one to find, under appropriate conditions, the fundamental group of a space in terms of fundamental groups of subspaces that cover it.  Another method involves the theory of covering spaces.  These two are perhaps the most foundational techniques for calculating fundamental groups.  With regard to covering spaces, we wish to highlight a construction, due to Milnor, of classifying spaces.  Using them, one can show that for an arbitrary group $G$, there exists a topological space $X$ with $\pi_1(X)\cong G$.

Various discrete analogues of fundamental groups for graphs appear in the literature.  This article deals with one such notion, namely that of the Matsushita discrete fundamental group.  Matsushita in fact defined \cite{matsushita} a family $\pi_1^r(X)$ of groups for a given graph $X$.  For $r=2$, in \cite{sankar}, Sankar gives an equivalent but purely combinatorial notion of discrete fundamental group.  (The same construction also appears in \cite{wrochna}.  We attribute it here to Matsushita, as that is the earliest appearance of it we have found to date.)  To take advantage of this purely combinatorial definition, we focus on $\pi_1^2(X)$ in this paper.  In \cref{sec:prelims}, we give all the precise definitions.

In \cite{sankar}, it is shown that $\pi_1^2(X)$ relates closely to the fundamental group of the neighborhood complex of $X$.  Moreover, $\pi_1^2$ is particularly well-suited for dealing with abelian Cayley graphs, as demonstrated in \cite{krebs-sankar}, wherein by using $\pi_1^2$ a complete characterization is found of those abelian groups that admit $3$-chromatic Cayley graphs.

This paper is structured as follows.  In \cref{sec:prelims}, we present the preliminary definitions and theorems needed for the subsequent sections.  In \cref{sec:svk}, we prove an analogue of the Seifert-van Kampen theorem.  (More precisely, we prove two such theorems, one for groupoids, and one for groups. These are \cref{thm:SVK-groupoids} and \cref{cor:SVK}, respectively.)  In \cref{sec:coverings}, we recap some results about graph coverings, mostly from \cite{matsushita}.  In \cref{sec:classifying}, we apply the results of \cref{sec:coverings} to prove \cref{thm:arbitrary-group}, which states that given an arbitrary group $G$, there exists a graph $X$ such that $\pi_1^2(X)\cong G$.

In \cref{sec:svk}, we assume the reader is familiar with basic concepts in category theory.  The other sections of this paper do not require any background in category theory.

Perhaps the most natural (and widely-studied) notion of fundamental group for a graph $X$ is to simply regard $X$ as a simplicial complex, with its edges as $1$-simplices, and then to define $\pi_1(X)$ to be the fundamental group of that simplicial complex.  It is well-known (see \cite{stillwell}, for example), that for a connected graph $X$, we have that $\pi_1(X)$ is a free group.  Our second main theorem contrasts sharply with this --- far from the severe restriction on what sort of group $\pi_1(X)$ can be (it must be free), $\pi_1^2(X)$ can be any group whatsoever.

Other versions of discrete fundamental groups for graphs appear in the literature, and other authors have proved results similar to ours for those.  One such version was studied by \cite{barcelo} and \cite{kapulkin}, among others.  In that version, to a given graph they associate a group $A_1(X)$.  In \cite{barcelo}, the authors obtain a Seifert van Kampen-type result for $A_1$, and in \cite{kapulkin}, this result is strengthened.  Using this stronger theorem, they prove that for any group $G$, there exists a graph $X$ such that $A_1(X)\cong G$. In \cref{sec:classifying}, we briefly discuss the relationship between $A_1$ and $\pi_1^2$, and we note that our construction yields an alternate proof of the theorem of \cite{kapulkin}.  (Essentially, our proof uses coverings, whereas their proof uses their analogue of the Seifert-van Kampen-theorem.)

We note that in \cite{grigoryan}, a Seifert van Kampen-type theorem is proved for yet another notion of discrete fundamental group.

Although to the authors' knowledge, there is not yet a generalization of $\pi_1^2$ to some form of higher homotopy groups for graphs, for a given graph $X$ there is however a family of groups $A_n(X)$.  In \cite{barcelo}, these are referred to as being akin to higher homotopy groups.  It might be interesting to know whether, given an abelian group $G$ and an integer $n\geq 2$, there exists a graph $X$ such that $A_n(X)\cong G$, and $A_j(X)$ is trivial for all $j\neq n$.  Perhaps such a construction might be modeled on that of Eilenberg-MacLane spaces, in the way that our construction in \cref{sec:coverings} is modeled on Milnor's construction of classifying spaces.

\section{Preliminaries}\label{sec:prelims}
    This section will start by introducing the notion of homotopy for graphs under consideration in this paper.  We follow here the presentation in \cite{sankar}.

\vspace{.05in}

    Given a graph $X$, a \textit{walk} of length $k \geq 0$ in $X$ is a sequence $P = v_0 ... v_k$ of $k+1$ vertices such that $v_i v_{i+1}$ is an edge of $X$ for each $0 \leq i < k$. $P$ is a \textit{closed walk} if its endpoints are equal, i.e., $v_0 = v_k$.  For a vertex $v$ in $X$, we let $N(v)$ denote the \emph{neighborhood} of $v$, that is, the set of all vertices in $X$ that are adjacent to $v$.
    \begin{definition}\label{def:homotopy}
        Let $X$ be a graph. Two walks $P$ and $P'$ of $X$ are \textit{homotopic} or \textit{homotopy equivalent} if the walk $P$ can be transformed into $P'$ via a finite sequence of steps: substitution, insertion, and deletion. These steps are defined as follows:
        \indent
        \begin{enumerate}
            \item Substitution: Given a walk $v_0 ... v_k$ and an index $0 < i < k$, replace $v_i$ with $v_i'$ for some $v_i' \in N(v_{i-1}) \cap N(v_{i+1})$.

            \item Insertion: Given a walk $v_0 ... v_k$ and an index $0 \leq i \leq k$, replace the single vertex $v_i$ with the three vertices $v_i w v_i$, for some $w \in N(v_i)$.

            \item Deletion: Given a walk $v_0 ... v_k$ and an index $0 < i < k$ where $v_{i-1} = v_{i+1}$, replace the three vertices $v_{i-1} v_i v_{i+1}$ with just $v_{i-1}$.
        \end{enumerate}
    \end{definition}

The similarities to topology should mostly be evident.  Walks in the graph will play the role of paths in topological spaces.  Homotopy equivalence of walks corresponds to path homotopy equivalence of paths.  (It follows from \cref{def:homotopy} that homotopic walks must have the same initial vertices as well as the same terminal vertices.  For that reason no distinction between homotopy and path homotopy needs to be made in the context of graphs.)  That insertion and deletion give rise to homotopies should be intuitive; substitution, less so.  In this regard it may help to think of the graph as a cellular complex, with vertices as $0$-cells, edges as $1$-cells, and $4$-cycles as $2$-cells.  See \cref{ex:cycle-def} below.  We contrast \cref{def:homotopy} to the notion of homotopy in \cite{barcelo} and \cite{kapulkin}, where in effect one regards both $3$-cycles and $4$-cycles as $2$-cells.

With this analogy in mind, the ``path'' (pun retroactively recognized but not originally intended) forward to define the discrete fundamental group is clear.  Elements will be closed walks at a given base vertex modulo homotopy equivalence, with concatenation as the group operation.  We now proceed to make this precise.

Let $v$ and $v'$ be vertices in a graph $X$.  It is straightforward to see that homotopy equivalence defines an equivalence relation on the set of walks with initial vertex $v$ and terminal vertex $v'$.  Given two walks $w_1=v_0\dots v_k$ and $w_2=v_k\dots v_{k+m}$ in $X$ where the initial vertex of $w_2$ equals the terminal vertex of $w_1$, recall that the \emph{concatenation of $w_1$ and $w_2$}, denoted $w_1*w_2$, is the walk $v_0\cdots v_{k+m}$ obtained by appending to the list of vertices in $w_1$ the list of all vertices (except the first) in $w_2$.  It is straightforward to see that if $w_1$ is homotopy equivalent to $w'_1$ and $w_2$ is homotopy equivalent to $w'_2$, then $w_1*w_2$ is homotopy equivalent to $w'_1*w'_2$.  Thus concatenation is well-defined on homotopy equivalence classes of walks.

\begin{definition}\label{def:fundamental-group}Let $X$ be a graph, and let $v_0$ be a vertex of $X$.  We define $\pi_1^2(X,v_0)$ to be the set of all homotopy equivalence classes of closed walks in $X$ based at $v_0$, equipped with the binary operation of concatenation.
\end{definition}

It is straightforward to prove that $\pi_1^2(X,v_0)$ is a group under concatenation.  Let $[w]$ denote the homotopy equivalence class of a walk $w$.  The identity element of $\pi_1^2(X,v_0)$ is $[v_0]$, i.e., the homotopy equivalence class of the trivial walk $v_0$.  The inverse of $[v_0v_1\dots v_{k-1}v_kv_0]$, namely $[v_0v_kv_{k-1}\dots v_1v_0]$, is formed by traversing the vertices in reverse order.

For a connected graph $X$, we define $\pi_1^2(X)$ to be $\pi_1^2(X,v_0)$ for an arbitrarily chosen vertex $v_0$ of $X$.  In \cite{sankar} it is shown that for any graph $X$ (whether connected or not), if there is a walk in $X$ from $v_0$ to $v'_0$, then $\pi_1^2(X,v_0)$ is isomorphic to $\pi_1^2(X,v'_0)$.  So for a connected graph $X$, it follows that $\pi_1^2(X)$ is well-defined up to group isomorphism.

We conclude this section with some examples.

\begin{example}\label{ex:path-def}
Let $P_2$ be a path graph of length $1$.  That is, $P_2$ has exactly two distinct vertices $a$ and $b$, and they are adjacent to each other.  A closed walk based at $a$ must be of the form $(ab)^na$ for some nonnegative integer $a$.  Performing $n$ deletions shows that this is homotopy equivalent to $a$.  Therefore $\pi_1^2(P_2)=1$.

More generally, let $P_n$ be a path graph of length $n-1$.  That is, $P_{n+1}$ has vertex set $\{0,1,\dots,n\}$, with edges $\{i,i+1\}$ for $i=0,\dots,n-1$.  Then $\pi_1^2(P_{n+1})=1$.  This can be shown directly from the definition, but an easier approach makes use of our version of the Seifert-van Kampen theorem.  We will return to this example in \cref{sec:svk}.\end{example}

\begin{example}\label{ex:cycle-def}
Let $C_n$ be an $n$-cycle, that is, the graph with $\mathbb{Z}/n\mathbb{Z}$ as its vertex set, with edges $\{i,i+1\}$ for $i=0,\dots,n-1$.

To compute $\pi_1^2(C_3,0)$, begin with an arbitrary closed walk $w$ in $C_3$ based at $0$, and then observe that by performing as many deletions as possible, $w$ is homotopy equivalent to $w_n$ for some integer $n$, where $w_n=(012)^n0$ if $n\geq 0$, and $w_n=(021)^{-n}0$ if $n\leq 0$.  With some work one can show that $[w_i]=[w_j]$ if and only if $i=j$.  Also, it is straightforward to compute that $[w_i]*[w_j]=[w_{i+j}]$.  It follows that $\pi_1^2(C_3)\cong\mathbb{Z}$.  This should be reminiscent of the computation of the fundamental group of the circle in topology.  In a similar way, one can show that $\pi_1^2(C_n)\cong\mathbb{Z}$ when $n\geq 5$.  In subsequent sections we show that---as with the circle in topology---these results can also be obtained via our Seifert-van Kampen-type theorems, as well as by using coverings.

Because we allow substitutions in the definition of homotopy, however, the computation for $C_4$ is different.  We begin similarly enough by showing that a closed walk $w$ in $C_4$ based at $0$ must be homotopy equivalent to $w_n$ for some integer $n$, where $w_n=(0123)^n0$ if $n\geq 0$, and $w_n=(0321)^{-n}0$ if $n\leq 0$.  However, we claim that for $n>0$, we have that $[w_n]=[0]$.  To see this, note that $0$ is a mutual neighbor of $1$ and $3$, so by performing substitutions, we may replace all instances of $2$ in $w_n$ with $0$s.  But then $w_n\sim (0103)^n0\sim(03)^n0\sim 0$.  Here $\sim$ denotes homotopy equivalence, and the last two equivalences involve repeated deletions.  We conclude that $\pi_1^2(C_4)=1$.

\end{example}

\section{Seifert-van Kampen}\label{sec:svk}

The Seifert-van Kampen theorem is a basic tool in algebraic topology for computing fundamental groups.  When a space $X$ equals a union of subspaces, under certain appropriate conditions this theorem expresses the fundamental group of a space in terms of the fundamental groups of those subspaces, taking into account the way in which they overlap.  The theorem is perhaps most naturally proven and stated using category-theoretic language, where the primary object is not the fundamental group but the fundamental groupoid.  The version of this theorem for groups falls out naturally from the more general version for groupoids, but the latter can deal with some computations that the former cannot handle --- this includes the ur-example of finding the fundamental group of a circle.

The statement and proof of our Seifert-van Kampen-type theorems for our fundamental groups and groupoids of graphs will be quite similar.  Mostly this will entail mimicry of the proofs for topological spaces.  The chief difference is in the condition imposed on subgraphs to ensure that the conclusion holds.  For topological spaces, one typically assumes that the various subspaces covering a given space $X$ are open subsets of $X$.  For graphs, we will instead assume that the various subgraphs covering a given graph $X$ contain among them every $4$-cycle in $X$.  Moreover, we will provide an example to show that the theorem can fail if this condition is not met.

Throughout this section we will assume that the reader is familiar with basic terminology and concepts from category theory as in \cite{bradley}, \cite{brown}.  This is the only section in this paper for which such a background is required.

\subsection{A Seifert-van Kampen-type theorem for Matsushita fundamental groupoids}

\begin{definition}
Let $X$ be a graph.  Let $A$ be a set of vertices of $X$.  We define the \emph{Matsushita fundamental groupoid of $X$ with base vertex set $A$}, denoted $\Pi(X,A)$, to be the category whose objects are the elements of $A$, and where the set of morphisms from an object $v_1$ to an object $v_2$ is the set of homotopy equivalence classes of walks in $X$ from $v_1$ to $v_2$.  Composition is defined by concatenation: $[w_2]\circ[w_1]:=[w_1*w_2]$.  (As discussed in \cref{sec:prelims}, this operation is well-defined.)

We define the \emph{Matsushita fundamental groupoid} of $X$, denoted $\Pi(X)$, by $\Pi(X,V)$, where $V$ is the vertex set of $X$.\end{definition}

Every morphism in $\Pi(X)$ is invertible (simply take the reverse walk), so $\Pi(X)$ is indeed a groupoid.  Moreover, it is straightforward to show that $\Pi$ defines a functor from the category of graphs (under graph homomorphisms) to the category $\mathsf{Grpd}$ of groupoids.

Observe that if $v_0$ is a vertex of $X$, then $\pi_1^2(X,v_0)$ is the group of morphisms of $\Pi(X,\{v_0\})$.

Our presentation for the remainder of this section closely follows the statement and proof of the classical Seifert-van Kampen theorem in \cite[p. 17]{mayconcise}.

Given categories $\mathcal{C}$ and $\mathcal{D}$, and functors $F_1,F_2:\mathcal{C}\to\mathcal{D}$, recall that a natural transformation $n$ from $F_1$ to $F_2$ associates with every object in $\mathcal{C}$ a morphism in $\mathcal{D}$ such that for any morphism $f:X\to Y$ in $\mathcal{C}$ the following diagram commutes

$$\begin{tikzcd}F_1(X)\arrow[r,"n_X"]\arrow[d,"F_1(f)"']&F_2(X)\arrow[d,"F_2(f)"]\\
F_1(Y)\arrow[r,"n_Y"]&F_2(Y)\end{tikzcd}$$

The category $\mathcal{D}^{\mathcal{C}}$ has as objects functors from $\mathcal{C}$ to $\mathcal{D}$, and natural transformations as morphisms. For any object $D\in\mathcal{D}$, the constant functor $\Delta D$ can be defined, which sends every object to $D$ and every morphism to $\text{id}_D$.

The colimit of a functor $F$, if it exists, is an object of $\mathcal{D}$ denoted $\varinjlim F$ and a natural transformation $c:F\to \Delta \varinjlim F$ such that for any other natural transformation $g:F\to\Delta G$ to a constant functor, there is a unique natural transformation $ \xi $ making the following diagram in $\mathcal{D}^{\mathcal{C}}$ commute

$$\begin{tikzcd}
F \arrow[d, "c"'] \arrow[dr,"g"] \\
 \Delta \varinjlim F\arrow[r, "\xi"']                  & \Delta G
\end{tikzcd}$$

The requirement is that given any morphism $f:X\to Y$ in $\mathcal{C}$, this diagram commutes:

$$\begin{tikzcd}
F(X)
\arrow[dd,"F(f)"]
\arrow[rd, "c_X"]
\arrow[rr, "g_X"]&&
G\arrow[dd, "\text{id}_G"]\\&
\varinjlim F
\arrow[ru, "\xi_X"']\\
F(Y)
\arrow[rd, "c_Y"]
\arrow[rr, "g_Y" near start]&&
G\\&
\varinjlim F
\arrow[ru,"\xi_Y"']\arrow[from=uu, crossing over, "\text{id}_{\varinjlim F}" near start ]
\end{tikzcd}$$

In light of the identity morphisms above, $\xi_X=\xi_Y$. So, if every pair of objects in  $\mathcal{C}$ can be connected by a chain morphisms, then the data of $\xi$ consists of a single morphism (for which we will use the same symbol $\xi$) and the preceding diagram simplifies to

$$\begin{tikzcd}
F(X)\arrow[dd,"F(f)"'] \arrow[dr, "c_X"'] \arrow[drr, bend left=20, "g_X"] \\
 & \varinjlim F\arrow[r,"\xi"]& G\\
F(Y) \arrow[ur,"c_Y"]\arrow[urr, bend right=20, "g_Y"]
\end{tikzcd}$$

\begin{theorem}\label{thm:SVK-groupoids}Let $\mathcal{U}$ be a cover of a graph $X$ by subgraphs, closed under pairwise intersection, such that every $4$-cycle in $X$ is contained in some $U\in\mathcal{U}$. Let $\mathcal{U}$ also denote the subcategory of $\mathsf{Set}$ whose objects are elements of $\mathcal{U}$ and whose morphisms are inclusion maps. Let $\Pi$ denote the Matsushita fundamental groupoid functor restricted to $\mathcal{U}$:

$$\Pi:\mathcal{U}\to\mathsf{Grpd}$$ 

Then the colimit exists and is

$$\varinjlim \Pi=\Pi(X)$$

with the natural transformation from $\Pi$ to $\Delta \Pi(X)$ given by inclusion maps.

\end{theorem}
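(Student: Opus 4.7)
The plan is to verify the universal property for $\Pi(X)$ equipped with the inclusion-induced natural transformation $c$, whose component $c_U : \Pi(U) \to \Pi(X)$ is the functor induced by the inclusion $U \hookrightarrow X$. Naturality of $c$ is immediate since for $U \subseteq V$ in $\mathcal{U}$ the inclusion-induced functors compose as expected. The substance is to show that for any natural transformation $g : \Pi \to \Delta G$ to a constant groupoid, there is a unique functor $\xi : \Pi(X) \to G$ with $\xi \circ c_U = g_U$ for all $U \in \mathcal{U}$.

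I would define $\xi$ as follows. For a vertex $v$ of $X$, choose any $U \in \mathcal{U}$ containing $v$ and set $\xi(v) = g_U(v)$. For a walk $w = v_0 v_1 \cdots v_k$ in $X$, decompose it into single-edge subwalks $e_i = v_{i-1} v_i$, choose for each $e_i$ some $U_i \in \mathcal{U}$ containing the edge, and set $\xi([w]) = g_{U_k}([e_k]) \circ \cdots \circ g_{U_1}([e_1])$. Independence of these definitions from the choices follows from closure of $\mathcal{U}$ under pairwise intersection combined with naturality of $g$: if a vertex (or edge) lies in both $U$ and $U'$, then $U \cap U' \in \mathcal{U}$, and the naturality squares for the inclusions $U \cap U' \hookrightarrow U$ and $U \cap U' \hookrightarrow U'$ force $g_U$ and $g_{U'}$ to agree on it.

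The key technical step is verifying that $\xi$ respects homotopy equivalence, i.e., is unchanged by each of the three moves in \cref{def:homotopy}. The insertion and deletion moves are local in a single edge: an insertion that replaces $v_i$ by $v_i w v_i$ invokes only the edge $\{v_i, w\}$, which lies in some $U \in \mathcal{U}$; inside $\Pi(U)$ the inserted walk is homotopic to the original, so $g_U$ identifies them. Substitution is the critical move and is where the 4-cycle hypothesis is used: when $v_i$ is replaced by $v_i'$ with $v_i, v_i' \in N(v_{i-1}) \cap N(v_{i+1})$ and the four vertices are distinct, they form a 4-cycle that by hypothesis lies in some $U \in \mathcal{U}$. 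Both edges of each subwalk $v_{i-1} v_i v_{i+1}$ and $v_{i-1} v_i' v_{i+1}$ then lie in $U$, so by the well-definedness above we may compute $\xi$ on each subwalk using $g_U$ throughout; functoriality of $g_U$ converts this to $g_U$ applied to the whole subwalk, and within $\Pi(U)$ the two subwalks are homotopic, so $g_U$ sends them to the same morphism. The degenerate case $v_{i-1} = v_{i+1}$ reduces to a pair of insertion-deletion moves.

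Functoriality of $\xi$ is immediate from the edge-wise definition, and $\xi \circ c_U = g_U$ holds by construction together with functoriality of $g_U$ on walks entirely contained in $U$. For uniqueness, any factorization $\xi'$ must agree with $g_U$ on all of $\Pi(U)$ for each $U$, hence on every vertex of $X$ and every single-edge walk; since these generate $\Pi(X)$ under composition, $\xi' = \xi$. I expect the main obstacle to be the substitution case in the well-definedness check: one must carefully reconcile the edge-by-edge construction of $\xi$---which in principle allows a different $U_i$ for each edge---with the coherent application of a single $g_U$ to the entire 4-cycle subwalk, a compatibility that ultimately traces back to naturality of $g$ together with closure of $\mathcal{U}$ under pairwise intersection.
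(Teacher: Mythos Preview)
Your proposal is correct and follows essentially the same approach as the paper: define $\xi$ on vertices and single-edge walks using any covering member, invoke closure under pairwise intersection plus naturality of $g$ for independence of choices, and verify invariance under each homotopy move, with the $4$-cycle hypothesis entering exactly at the substitution step. Your treatment is in fact slightly more careful than the paper's in that you explicitly note the degenerate substitution case $v_{i-1}=v_{i+1}$, which the paper glosses over.
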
 

\begin{proof} Note that any pair of objects $U,V\in\mathcal{U}$ can be connected by the morphisms $U\overset{\iota}{\longleftarrow} U\cap V\overset{\iota}{\longrightarrow} V$. So, we have to find a unique morphism $\xi:\Pi(X)\to G$ such that for all $U_1,U_2\in\mathcal{U}$ with $U_1\subset U_2$, the diagram (\ref{diagram}) in $\mathsf{Grpd}$ commutes.

\begin{equation}\label{diagram}\begin{tikzcd}
\Pi(U_1)\arrow[dd,"\iota"'] \arrow[dr, "\iota"'] \arrow[drr, bend left=20, "g_{U_1}"] \\
 & \Pi(X)\arrow[r,"\xi"]& G\\
\Pi(U_2) \arrow[ur,"\iota"]\arrow[urr, bend right=20, "g_{U_2}"]
\end{tikzcd}
\end{equation}

 Define $\xi$ as follows. For any object $v$ (ie. vertex in $X$), pick any $U$ that contains $v$ and define

$$\xi(v)= g _U(v)$$

The pairwise intersection property shows that this does not depend on which $U$ is used. For any identity morphism $\text{Id}_v$ (the trivial walk equivalence class at $v$), of course  define $\xi(\text{Id}_v)=\text{Id}_G$. For all remaining walk classes, pick some representative walk for $\gamma$

\begin{equation}\label{gamma}\gamma= [v_1\cdots v_n]
\end{equation}

and define

\begin{equation}\label{xiGamma}\xi(\gamma)=g_{U_1}([v_1v_{2}])\cdot g_{U_2}([v_2v_3])\cdots g_{U_{n-1}}([v_{n-1}v_n])
\end{equation}

where each $U_i\in\mathcal{U}$ contains $v_i$, $v_{i+1}$, and the edge between them. The pairwise intersection property shows that this also does not depend on the choices $U_i$, so it is well-defined for any representative walk. To show it is well-defined for each walk class, suppose $\gamma$ is modified by an insertion

\begin{equation}\label{gammaSecond}\gamma=[v_1\cdots v_iwv_i\cdots v_n]
\end{equation}

The vertices $v_i$ and $w$ and the edge between them are contained in some element of the cover $\tilde U$, so $\tilde U$ also contains the walk $v_iwv_i$ which is equivalent to $v_i$; so, noting that the maps $g_U$ are functors

$$g_{\tilde U}([v_iw])g_{\tilde U}([wv_i])= g _{\tilde U}([v_iwv_i])= g _{\tilde U}([v_i])=\text{Id}_G$$

So using either expression for $\gamma$ (equations \ref{gamma} and \ref{gammaSecond}) to define $\xi(\gamma)$ in the prescribed manner (equation \ref{xiGamma}) produces the same result. This also proves the same for deletions. The argument for substitutions is not that different; given two expressions for $\gamma$ obtained from each other by substitution

$$\gamma=[v_1\cdots v_iav_{i+1}\cdots v_n]=[v_1\cdots v_ibv_{i+1}\cdots v_n]$$

where $v_i,a,v_{i+1}$, and $b$ are the vertices of a $4$-cycle, now let $\tilde U$ be some element of the cover which contains these vertices, and so

$$ g _{\tilde U}([v_iav_{i+1}])= g _{\tilde U}([v_ibv_{i+1}])$$

Again using the functorial property of $g_{\tilde U}$ 

$$ g _{\tilde U}([v_ia])\cdot g _{\tilde U}([av_{i+1}])= g _{\tilde U}([v_ib])\cdot g _{\tilde U}([bv_{i+1}])$$

so again it doesn't matter which representative for $\gamma$ is used. Note that $\xi$ is indeed a functor; given two morphisms in $\Pi(X)$

$$\gamma_1=[v_1\cdots v_n]$$
$$\gamma_2=[w_1\cdots w_m]$$

$$\xi(\gamma_1\cdot \gamma_2)=\xi([v_1\cdots v_nw_1\cdots w_n])$$
$$=\xi([v_1v_2])\cdots\xi([v_{n-1}v_n])\xi([w_1w_2])\cdots\xi([w_{m-1}w_m])$$
$$=\xi([v_1\cdots v_n])\xi([w_1\cdots w_m])=\xi(\gamma_1)\xi(\gamma_2)$$

Finally, if $\gamma$ is some path class in some $U$, equation (\ref{xiGamma}) shows that the relevant diagram (\ref{diagram}) does commute for $\gamma$; on the other hand, that equation must in general hold if the diagrams (\ref{diagram}) commute, so $\xi$ is unique.\end{proof}

\subsection{A Seifert-van Kampen-type theorem for Matsushita fundamental groups}

We now use our Seifert-van Kampen theorem for Matsushita fundamental groupoids (\cref{thm:SVK-groupoids}) to prove a Seifert-van Kampen theorem for Matsushita fundamental groups (\cref{cor:SVK}).  More generally, we show that the theorem holds under appropriate conditions when the set of base vertices is a proper subset of the full vertex set; applying this to a single base vertex yields the theorem for groups.

\begin{theorem}\label{thm:SVK-set-of-base-vertices} Let $X$ be a graph, and let $\mathcal{U}$ be a cover of $X$ by subgraphs, closed under intersections, such that every $4$-cycle in $X$ is contained in some member of $\mathcal{U}$.  Let $A$ be a subset of the vertex set of $X$.  Suppose that every connected component of every member of $\mathcal{U}$ contains some element of $A$ as a vertex.  Let $\Pi(\cdot,A)$ be the restriction of the Matsushita fundamental groupoid functor to $\mathcal{U}$:

$$\Pi(\cdot,A):\mathcal{U}\to \mathsf{Grpd}$$

The colimit of $\Pi(\cdot,A)$ exists and 

$$\varinjlim \Pi(\cdot,A)=\Pi(X,A)$$

with natural transformation from $\Pi(\cdot,A)$ to $\Delta \Pi(X,A)$ given by inclusions.\end{theorem}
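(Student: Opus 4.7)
The plan is to mimic the proof of \cref{thm:SVK-groupoids}, verifying the universal property of the colimit directly. First I note that the inclusions $\Pi(U, A) \hookrightarrow \Pi(X, A)$ do assemble into a cocone from $\Pi(\cdot, A)$ to $\Delta\Pi(X, A)$, so the entire content of the theorem is the following: given any cocone $g_U : \Pi(U, A) \to \Delta G$ in $\mathsf{Grpd}$, there exists a unique functor $\xi : \Pi(X, A) \to G$ with $\xi \circ \iota_U = g_U$ for every $U \in \mathcal{U}$.

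The key step will be a decomposition lemma analogous to the edge-by-edge description of $\xi$ in the proof of \cref{thm:SVK-groupoids}: every morphism in $\Pi(X, A)$ can be written as a concatenation of walks each supported in a single member of $\mathcal{U}$ and each running between vertices of $A$. Given a representative walk $\gamma = v_0 v_1 \cdots v_n$ with $v_0, v_n \in A$, I would choose $U_i \in \mathcal{U}$ containing the edge $v_i v_{i+1}$ for each $i$. For each internal vertex $v_i$, the intersection $U_{i-1} \cap U_i$ lies in $\mathcal{U}$ by closure under intersection; the connected-component hypothesis then supplies a vertex $b_i \in A$ in the component of $v_i$ in $U_{i-1} \cap U_i$, together with a walk $\sigma_i$ in that intersection from $v_i$ to $b_i$. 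Setting $b_0 = v_0$, $b_n = v_n$, and $\sigma_0, \sigma_n$ trivial, each sub-walk $w_i := \sigma_i^{-1} * (v_i v_{i+1}) * \sigma_{i+1}$ lies in $U_i$, runs from $b_i$ to $b_{i+1}$ (both in $A$), and the concatenation $w_0 * w_1 * \cdots * w_{n-1}$ is homotopy equivalent to $\gamma$, since the successive $\sigma_i * \sigma_i^{-1}$ pairs cancel by deletions. This decomposition forces the definition
\[\xi([\gamma]) := g_{U_0}([w_0]) \cdot g_{U_1}([w_1]) \cdots g_{U_{n-1}}([w_{n-1}]),\]
which will yield uniqueness and functoriality essentially for free.

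The hard part will be establishing that $\xi$ is well-defined. There are two layers to check. First, the value must be independent of the choices of $U_i$, $b_i$, and $\sigma_i$: alternative choices of $U_i$ can be absorbed into the further intersection (itself in $\mathcal{U}$) via naturality of the cocone $g$, while rerouting a $\sigma_i$ introduces compensating factors in the two adjacent pieces $w_{i-1}$ and $w_i$ that cancel by functoriality of $g_{U_{i-1}}$ and $g_{U_i}$. Second, $\xi$ must be invariant under the three homotopy moves on $\gamma$. Insertion and deletion are the easier cases: the inserted vertex and its witness lie in a common $U$, and their contribution collapses to an identity as in \cref{thm:SVK-groupoids}. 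Substitution is the subtlest and is precisely where the $4$-cycle hypothesis enters: the offending $4$-cycle is by assumption contained in some $U \in \mathcal{U}$, so after arranging the decomposition so that the affected edges lie in that $U$, the substitution becomes a homotopy already inside $\Pi(U, A)$, and $g_U$ carries it to an equality in $G$.
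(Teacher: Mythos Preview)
Your proposal is correct, but it takes a genuinely different route from the paper's own proof.  The paper does \emph{not} re-run the edge-by-edge argument of \cref{thm:SVK-groupoids} with added base-vertex bookkeeping.  Instead it \emph{reduces} to \cref{thm:SVK-groupoids} by constructing, once and for all, a natural transformation $N:\Pi\to\Pi(\cdot,A)$: for each vertex $x$ it fixes a walk $\gamma_x$ from some point of $A$ to $x$, chosen inside the intersection $U_0$ of \emph{all} members of $\mathcal{U}$ containing $x$, and then sets $N_U(\gamma)=\gamma_d^{-1}\gamma\gamma_c$.  Given a cocone $g$ on $\Pi(\cdot,A)$, the composite $gN$ is a cocone on $\Pi$, so \cref{thm:SVK-groupoids} supplies a unique $\xi:\Pi(X)\to G$; restricting to $\Pi(X,A)$ and observing that each $N_U$ is the identity on $\Pi(U,A)$ gives existence, and uniqueness follows by extending any competing $\tilde\xi$ back to $\Pi(X)$ via the same $\gamma_x$.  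The paper's argument is shorter precisely because all the well-definedness checks (independence of choices, invariance under homotopy moves, the role of the $4$-cycle hypothesis) were already done in \cref{thm:SVK-groupoids} and need not be repeated.  Your direct approach, by contrast, makes the local choices of rerouting walks $\sigma_i$ explicit at each internal vertex, and only ever lands in a \emph{pairwise} intersection $U_{i-1}\cap U_i$; this is arguably an advantage, since the paper's construction of $N$ implicitly requires the intersection of \emph{all} members of $\mathcal{U}$ containing a given vertex to lie in $\mathcal{U}$.
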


\begin{proof} Given a natural transformation $g$ from $\Pi(\cdot,A)$ to a constant functor $\Delta G$, we have to find a unique morphism $\xi:\Pi(X,A)\to G$ such that for all $U_1,U_2\in\mathcal{U}$ with $U_1\subset U_2$, the following diagram in $\mathsf{Grpd}$ commutes

\begin{equation}\label{diagram2}\begin{tikzcd}
\Pi(U_1,A)\arrow[dd,"\iota"'] \arrow[dr, "\iota"'] \arrow[drr, bend left=20, "g_{U_1}"] \\
 & \Pi(X,A)\arrow[r,"\xi"]& G\\
\Pi(U_2,A) \arrow[ur,"\iota"]\arrow[urr, bend right=20, "g_{U_2}"]
\end{tikzcd}
\end{equation}

We now construct a natural transformation $N$ from $\Pi$ to $\Pi(\cdot,A)$.  First, for each vertex $x\in X$, choose a walk class $\gamma_x$ as follows. If $x\in A$, simply choose $\gamma_x=\id_x$.  Otherwise select some vertex $a\in A$ such that $a$ is a vertex in the connected component of the intersection $U_0$ of all members of $\mathcal{U}$ containing $x$.  Thus there is a walk in $U_0$ from $a$ to $x$.  Choose $\gamma_x$ to be the homotopy equivalence class of one such walk.  Now for any morphism $\gamma$ in $\text{Hom}_{\Pi(U)}(c,d)$, we define $N_U(\gamma):=\gamma_d^{-1}\gamma\gamma_c$ in $\Pi(U,A)$.  It is straightforward to verify that $N$ is then a natural transformation, and that each $N_U$ restricts to the identity functor on $\Pi(U,A)$.

Consider the following diagram.

$$\begin{tikzcd}\Pi\arrow[r,"N"]\arrow[d,"\text{inclusions}"']&\Pi(\cdot,A)\arrow[d,"g"]\\
\Delta\Pi(X)\arrow[r,"\xi"]&\Delta G\end{tikzcd}$$

The composition $gN$ is then a natural transformation from $\Pi$ to $\Delta G$, so by \cref{thm:SVK-groupoids} there is a unique $\xi:\Delta\Pi(X)\to \Delta G$ making the diagrams (\ref{diagramGN}) commute. 

\begin{equation}\label{diagramGN}\begin{tikzcd}
\Pi(U_1)\arrow[dd,"\iota"'] \arrow[dr, "\iota"'] \arrow[drr, bend left=20, "g_{U_1}N_{U_1}"] \\
 & \Pi(X)\arrow[r,"\xi"]& G\\
\Pi(U_2) \arrow[ur,"\iota"]\arrow[urr, bend right=20, "g_{U_2}N_{U_2}"]
\end{tikzcd}
\end{equation}

Now consider the ``restrictions" of these diagrams to the subcategories $\Pi(U,A)$; since each $N_U$ is the identity on $\Pi(U,A)$, the result is the diagrams (\ref{diagram2}), which proves existence. For uniqueness, given any $\tilde \xi$ making the diagrams (\ref{diagram2}) commute, the diagrams could be ``extended" to diagrams (\ref{diagram}) by a similar procedure (i.e., using the $\gamma_x$ to extend each $g_U$ to the whole groupoid $\Pi(U)$); by the preceding theorem then the extensions of $\xi$ and $\tilde \xi$ must be the same, so $\xi=\tilde \xi$.\end{proof}

Taking $A$ to be a singleton set and covering $X$ by two subgraphs, we obtain the following corollary.

\begin{corollary}\label{cor:SVK}
Let $X$ be the union of two subgraphs $U_1$ and $U_2$.  Suppose that $X$, $U_1$, $U_2$, and $U_1\cap U_2$ are connected, and let $v_0$ be a vertex in $U_1\cap U_2$.  Then $\pi_1^2(X,v_0)$ is isomorphic to\[[\pi_1^2(U_1,v_0)*\pi_1^2(U_2,v_0)]/N,\]where $N$ is the normal subgroup of $\pi_1^2(U_1,v_0)*\pi_1^2(U_2,v_0)$ generated by all elements of the form \[(\iota_1)_*(\gamma)(\iota_2)_*(\gamma^{-1}),\]where $\iota_1$ and $\iota_2$ are inclusions of $U_1\cap U_2$ into $U_1$ and $U_2$, respectively.
\end{corollary}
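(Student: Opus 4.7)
The plan is to apply \cref{thm:SVK-set-of-base-vertices} with the cover $\mathcal{U}=\{U_1,U_2,U_1\cap U_2\}$ and base vertex set $A=\{v_0\}$ (tacitly assuming, as in that theorem's hypothesis, that every $4$-cycle of $X$ is contained in $U_1$ or $U_2$). The cover is closed under pairwise intersections because $U_1\cap U_2$ is already an element of it; the connectedness hypotheses together with $v_0\in U_1\cap U_2$ guarantee that every connected component of every member of $\mathcal{U}$ contains the element $v_0\in A$. Each $\Pi(U,\{v_0\})$ for $U\in\mathcal{U}$ is a one-object groupoid, which is just the group $\pi_1^2(U,v_0)$, and similarly $\Pi(X,\{v_0\})=\pi_1^2(X,v_0)$. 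The theorem therefore identifies $\pi_1^2(X,v_0)$ with the colimit in $\mathsf{Grpd}$ of the diagram
\[
\pi_1^2(U_1,v_0)\xleftarrow{(\iota_1)_*}\pi_1^2(U_1\cap U_2,v_0)\xrightarrow{(\iota_2)_*}\pi_1^2(U_2,v_0).
\]

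Next I would argue that this colimit, computed in $\mathsf{Grpd}$, agrees with the pushout in the category $\mathsf{Grp}$ of groups. Because all three groupoids in the diagram share the single object $v_0$ and the inclusion functors act as the identity on that object, any natural transformation into a constant functor on a groupoid $\mathcal{G}$ amounts to choosing a single object of $\mathcal{G}$ (the common image of $v_0$) together with a compatible cocone of ordinary group homomorphisms into the automorphism group of that object. Hence the universal cocone is obtained by forming the pushout in $\mathsf{Grp}$ and viewing the resulting group as a one-object groupoid.

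Finally I would identify this group-theoretic pushout with the quotient in the statement. By the standard presentation of amalgamated free products, the pushout
$\pi_1^2(U_1,v_0)*_{\pi_1^2(U_1\cap U_2,v_0)}\pi_1^2(U_2,v_0)$
is precisely $[\pi_1^2(U_1,v_0)*\pi_1^2(U_2,v_0)]/N$, where $N$ is the normal subgroup generated by all elements of the form $(\iota_1)_*(\gamma)(\iota_2)_*(\gamma^{-1})$ with $\gamma$ ranging over $\pi_1^2(U_1\cap U_2,v_0)$. Composing the isomorphisms yields the desired conclusion.

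The main obstacle, such as it is, lies in the middle step: colimits in $\mathsf{Grpd}$ can in principle be more complicated than colimits of the corresponding automorphism groups, but the single base vertex $v_0$ collapses the object set of the colimit to a singleton and thereby makes the identification with the pushout in $\mathsf{Grp}$ essentially routine. This collapse is exactly the reason for passing through the groupoid version of Seifert-van Kampen in \cref{thm:SVK-set-of-base-vertices} rather than trying to prove the group statement directly.
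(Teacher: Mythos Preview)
Your proposal is correct and follows exactly the route the paper takes: the paper's proof is the single sentence ``Taking $A$ to be a singleton set and covering $X$ by two subgraphs, we obtain the following corollary,'' and you have simply unpacked what that sentence means, including the identification of the $\mathsf{Grpd}$-colimit of one-object groupoids with the pushout in $\mathsf{Grp}$ and the standard description of that pushout as an amalgamated free product. Your parenthetical observation that the $4$-cycle hypothesis from \cref{thm:SVK-set-of-base-vertices} must be tacitly in force is well taken; as stated, the corollary omits it, yet it is clearly needed (cf.\ \cref{ex:cycle-SVK}).
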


\subsection{Examples}

\begin{example}
In \cref{ex:path-def}, we showed that $\pi_1^2(P_2)=1$.  Writing $P_{n+1}$ as a union of $P_2$ and $P_{n}$, it is straightforward to apply \cref{cor:SVK} inductively to conclude that $\pi_1^2(P_{n+1})=1$ for all positive integers $n$.
\end{example}

\begin{example}\label{ex:cycle-SVK}
We cannot use \cref{cor:SVK} to compute $\pi_1^2(C_n)$; the situation is much like that of the topological space $S^1$.  \cref{thm:SVK-set-of-base-vertices} will do the job, however, when $n\neq 4$.  Let $U_1$ be a path of length $1$ with ordered vertex set $[0,1]$.  Let $U_2$ be a path of length $n-1$ with ordered vertex set $[1,\dots,n-1,0]$.  Let $A=\{0,1\}$.  Then $\mathcal{U}=\{U_1,U_2,U_1\cap U_2\}$ satisfies the conditions of \cref{thm:SVK-set-of-base-vertices}, because $C_n$ contains no $4$-cycles.  It is straightforward to show that the colimit of $\Pi(\cdot,A)$ can be realized as a groupoid with two objects $0$ and $1$.  Moreover, $\pi_1^2(C_n,0)$ is the group of morphisms from $0$ to $0$ in $\varinjlim \Pi(\cdot,A)$, and this is infinite cyclic.  So $\pi_1^2(C_n)\cong\mathbb{Z}$ when $n\neq 4$.

In \cref{ex:cycle-def} we showed that $\pi_1^2(C_4)=1$.  Had we invalidly applied \cref{thm:SVK-set-of-base-vertices} here, we would have concluded incorrectly that $\pi_1^2(C_4)\cong\mathbb{Z}$.  This shows that we cannot omit the $4$-cycle condition from \cref{thm:SVK-set-of-base-vertices}.
\end{example}

\begin{example}
Let $X$ be the union of two cycles whose intersection is a single vertex, where neither cycle is a $4$-cycle.  It follows immediately from \cref{cor:SVK} and \cref{ex:cycle-SVK} that $\pi_1^2(X)\cong\mathbb{Z}*\mathbb{Z}$, the free product of $\mathbb{Z}$ with itself.  This is analogous to a wedge product of two circles.\end{example}

\section{Every group is $\pi_1^2(X)$ for some graph $X$}\label{sec:classifying}

The goal of this section is to prove the following theorem.

\begin{theorem}\label{thm:arbitrary-group}
    Let $G$ be an arbitrary group.  Then there exists a connected graph $X$ such that $\pi_1^2(X)\cong G$.
\end{theorem}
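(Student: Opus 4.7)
The plan is to mimic Milnor's construction of a classifying space in the graph-theoretic setting and then invoke the Matsushita graph-covering theory developed in Section~\ref{sec:coverings}. The goal is to build a connected graph $EG$ equipped with a free $G$-action such that (i) the quotient $p\colon EG\to X:=EG/G$ is a Matsushita covering with deck transformation group $G$, and (ii) $\pi_1^2(EG)=1$. The standard covering-space dictionary for $\pi_1^2$ then delivers $\pi_1^2(X)\cong G$.

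Designing $EG$ requires balancing two constraints. On one hand it must be sufficiently rich in 4-cycles --- the 2-cells in the Matsushita picture --- so that every closed walk is null-homotopic. On the other hand the $G$-action must be ``free enough'': no vertex adjacent to any nontrivial translate, and no two distinct neighbors of a vertex in the same $G$-orbit. A natural first candidate is an infinite Hamming-type graph with vertex set $G^{\N}$, edges between sequences differing in exactly one coordinate, and $G$ acting diagonally by left multiplication. The separation conditions are then straightforward, since any two distinct neighbors of a vertex $v$ agree with $v$ on infinitely many common coordinates and so cannot be in the same $G$-orbit. This candidate may, however, carry nontrivial $\pi_1^2$ arising from the triangle structure of each $K_{|G|}$ factor when $|G|\ge 3$, in which case I would pass to a bipartite or subdivided variant in which triangles are destroyed but 4-cycles are preserved.

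The principal obstacle will be verifying $\pi_1^2(EG)=1$. Because any closed walk modifies only finitely many coordinates, this reduces to a statement about finite-dimensional approximations of $EG$. I would prove triviality of the approximations by induction on the number of coordinates, applying Corollary~\ref{cor:SVK} to a cover by ``coordinate-hyperplane'' subgraphs --- each a copy of the lower-dimensional approximation --- and exploiting the 4-cycles that connect them. Once $\pi_1^2(EG)=1$ is in hand, the covering-theoretic conclusion from Section~\ref{sec:coverings} yields $\pi_1^2(X)\cong G$ as a formality. Thus the core difficulty is purely combinatorial: selecting a model for $EG$ whose 4-cycle architecture is sufficiently rich for the Seifert-van Kampen induction to terminate at the trivial group, while still admitting a free, suitably separated $G$-action.
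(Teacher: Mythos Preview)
Your high-level strategy coincides with the paper's: construct a connected graph $\tilde X$ with $\pi_1^2(\tilde X)=1$ carrying a free $G$-action that satisfies the separation hypotheses of \cref{lem:group-action}, then pass to the quotient. What you have written, however, is a plan rather than a proof, and the gap sits exactly where you yourself locate ``the principal obstacle.'' Your candidate $EG=G^{\N}$ with Hamming edges does meet the separation conditions (for $g\neq e$ the vertices $v$ and $g\cdot v$ differ in every coordinate, so no walk of length $\le 4$ joins them), but it cannot have trivial $\pi_1^2$ when $|G|\ge 3$: a single coordinate already contains a triangle, and since substitution, insertion, and deletion each preserve the parity of a walk's length, no odd closed walk is ever null-homotopic. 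Your fallback---``pass to a bipartite or subdivided variant''---is where the real content would live, and you have not specified one. Subdividing every edge, for instance, destroys all $4$-cycles, so substitutions become vacuous and $\pi_1^2$ of the result is the ordinary (free, highly nontrivial) fundamental group of a graph. A bipartite double cover or a more bespoke modification might work, but until a concrete model is fixed you cannot check the $4$-cycle hypothesis of \cref{cor:SVK} for your proposed cover, let alone run the induction.

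For comparison, the paper does not use a Hamming or cube model at all. It builds $\tilde X$ as a simplicial-style object: an explicit finite ``building block'' $B$ (three $2\times 2$ grids glued around a triangle, giving a graph with three corner vertices, $\pi_1^2(B)=1$, and no walk of length $1$, $2$, or $4$ between distinct corners), one copy $B(g_1,g_2,g_3)$ for each ordered triple of distinct elements of $G$, with corners labeled $g_1,g_2,g_3$ and boundary paths identified along shared pairs. The group acts by $g\cdot B(g_1,g_2,g_3)=B(gg_1,gg_2,gg_3)$. Triviality of $\pi_1^2(\tilde X)$ is proved by a direct combinatorial argument---push any closed walk to the outer ``fringe'' via substitutions, then use the triple-gluing to collapse---rather than via Seifert--van Kampen. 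Your Milnor-inspired route could very likely be completed, but as it stands the decisive construction and verification are missing.
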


\subsection{Coverings}\label{sec:coverings}

In \cite{matsushita}, Matsushita develops a theory of coverings for the family $\pi_1^r(X)$ of discrete fundamental groups.  We begin by stating (without proof) a result from that article which we will require for our construction of a graph $X$ with prescribed $\pi_1^2(X)$.  We focus exclusively on $\pi_1^2(X)$, as those are the groups of interest in this paper.  The lemma we state here follows quickly from Theorem 6.11 in \cite{matsushita}.

\begin{lemma}\label{lem:group-action}Let $G$ be a group with identity element $e$.  Let $\tilde{X}$ be a connected graph such that $\pi^{2}_1(\tilde{X})=1$.  Suppose that $G$ acts freely on $\tilde{V}$ (the vertex set of $\tilde{X}$) by graph isomorphisms.  Also suppose that for all vertices $\tilde{v}\in\tilde{V}$ and all elements $g\in G\setminus{e}$, we have that there is no walk of length $1$, $2$ or $4$ from $\tilde{v}$ to $g\cdot \tilde{v}$.  Let $X=\tilde{X}/G$.  Then $\pi_1^{2}(X)\cong G$.\end{lemma}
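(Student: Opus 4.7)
The plan is to mimic the classical covering-space proof that a free action of $G$ on a simply connected space produces a quotient whose fundamental group is $G$. The projection $p:\tilde X\to X$ is a graph homomorphism, and the three hypotheses on walks of length $1$, $2$, and $4$ should translate into local injectivity properties of $p$ that let us lift walks and homotopies. Specifically, the length-$1$ hypothesis forbids any edge $\tilde v\sim g\tilde v$, so $p$ sends edges of $\tilde X$ to honest edges (no self-loops) of $X$; the length-$2$ hypothesis forbids $\tilde v$ and $g\tilde v$ from sharing a common neighbor, so $p$ restricted to any neighborhood $N(\tilde v)$ is injective; and the length-$4$ hypothesis will turn out to be exactly what is needed to lift substitution moves.

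With these in hand, I would first establish \emph{unique path lifting}: given a walk $w=v_0\cdots v_n$ in $X$ and a vertex $\tilde v_0\in p^{-1}(v_0)$, there is a unique walk $\tilde w=\tilde v_0\cdots\tilde v_n$ in $\tilde X$ with $p(\tilde w)=w$. This follows by induction on $n$, using that each edge $v_iv_{i+1}$ lifts to some edge at $\tilde v_i$ (surjectivity of $p$ on edges) and that injectivity of $p|_{N(\tilde v_i)}$ pins down the next vertex. Next I would prove \emph{homotopy lifting}: if $w\sim w'$ in $X$ and $\tilde w,\tilde w'$ are the lifts starting at the same $\tilde v_0$, then $\tilde w\sim\tilde w'$ in $\tilde X$; in particular they share a terminal vertex. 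Insertions and deletions lift immediately, since the inserted neighbor $u\in N(v_i)$ has a unique preimage in $N(\tilde v_i)$. For a substitution replacing $v_i$ by $v'_i\in N(v_{i-1})\cap N(v_{i+1})$, let $\tilde v'_i$ be the unique preimage of $v'_i$ in $N(\tilde v_{i-1})$ and $\tilde v''_i$ the unique preimage in $N(\tilde v_{i+1})$. Both project to $v'_i$, so $\tilde v''_i=g\tilde v'_i$ for some $g\in G$, and the walk $\tilde v'_i\,\tilde v_{i-1}\,\tilde v_i\,\tilde v_{i+1}\,\tilde v''_i$ has length $4$. The length-$4$ hypothesis forces $g=e$, so $\tilde v'_i$ is adjacent to both $\tilde v_{i-1}$ and $\tilde v_{i+1}$, giving the desired substitution upstairs.

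With lifting in place, define $\phi:\pi_1^2(X,v_0)\to G$ by lifting a closed walk $w$ based at $v_0$ to the unique walk $\tilde w$ starting at a fixed $\tilde v_0$, and setting $\phi([w])=g$ where the terminal vertex of $\tilde w$ is $g\tilde v_0$. Homotopy lifting plus freeness make this well-defined. It is a homomorphism because, in lifting $w_1*w_2$, the portion lifting $w_2$ at $g_1\tilde v_0$ is the $g_1$-translate of the lift of $w_2$ at $\tilde v_0$, so the terminal vertex is $g_1g_2\tilde v_0$. Surjectivity comes from connectedness of $\tilde X$: given $g\in G$, pick any walk from $\tilde v_0$ to $g\tilde v_0$ and project. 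Injectivity comes from $\pi_1^2(\tilde X)=1$: if the lift $\tilde w$ closes up at $\tilde v_0$, it is homotopic upstairs to the trivial walk, and projecting that homotopy via $p$ yields a nullhomotopy of $w$ in $X$.

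The main obstacle is the homotopy-lifting step for substitutions. Insertion, deletion, and concatenation all involve only walks of length $\le 2$, whose lifts are controlled by the length-$1$ and length-$2$ conditions, but substitutions produce the length-$4$ obstruction described above, and it is precisely at that point that the third hypothesis is indispensable. Everything else is a direct adaptation of the standard topological covering argument.
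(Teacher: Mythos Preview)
Your argument is correct and complete. The unique-path-lifting and homotopy-lifting steps work exactly as you describe; in particular, your analysis of the substitution move is the heart of the matter, and the length-$4$ hypothesis enters precisely where you say it does. The remaining verifications (well-definedness, homomorphism via $G$-equivariance of lifts, surjectivity from connectedness, injectivity from $\pi_1^2(\tilde X)=1$ together with the fact that each elementary homotopy move projects under $p$) are routine.

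The paper, however, does not actually prove this lemma: it is stated without proof and attributed to Matsushita, with the remark that it ``follows quickly from Theorem~6.11 in \cite{matsushita}.'' So your approach is genuinely different in that it is a direct, self-contained covering-space argument rather than an appeal to the general machinery developed in Matsushita's paper. What your route buys is transparency---one sees exactly which hypothesis governs which lifting step (length $1$ for no self-loops, length $2$ for injectivity on neighborhoods and hence unique path lifting, length $4$ for substitution lifting)---at the cost of reproving a special case of a known covering theory. What the paper's route buys is brevity and a pointer to a broader framework where analogous results hold for all $\pi_1^r$.
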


Here $\tilde{X}/G$ denotes the quotient of $\tilde{X}$ by the action of $G$.  Its vertices are the orbits of the action of $G$.  Two orbits are adjacent if and only if one contains a vertex $\tilde{v}$ and the other contains some vertex $\tilde{w}$ adjacent to $\tilde{v}$.

\begin{example}
We return one last time to a computation of $\pi_1(C_n)$, this time using coverings.  Let $n\geq 3$ be a positive integer with $n\neq 4$.  Let $\tilde{X}$ be the ``doubly infinite path graph,'' that is, the graph with vertex set $\mathbb{Z}$ such that two vertices $x$ and $y$ are adjacent if and only if $|x-y|=1$.  We define an action of $\mathbb{Z}$ on $\tilde{X}$ by $k\cdot x=k+x$.  It is straightforward to verify that the conditions of \cref{lem:group-action} are met and that $\tilde{X}/\mathbb{Z}$ is the cycle graph $C_n$.  Therefore we once again find that $\pi_1^2(C_n)\cong\mathbb{Z}$.
\end{example}

\subsection{A construction of a graph $X$ with prescribed $\pi_1^2(X)$}

\begin{proof}[Proof of \cref{thm:arbitrary-group}]
Let $G$ be a group.  First assume that $|G|\geq 4$.  We will handle the special cases of $G\cong\mathbb{Z}/2\mathbb{Z}$ and $G\cong\mathbb{Z}/3\mathbb{Z}$ later.

We seek to use \cref{lem:group-action}.  Towards that end we will construct a graph $\tilde{X}$ satisfying the conditions of that lemma.  The graph $\tilde{X}$ will be constructed by stitching together various copies of the graph shown in \cref{fig:building-block}.  We now define the graph $B$.

Let $\text{Grid}(2,2)$ be the graph with vertex set $\{0,1,2\}\times\{0,1,2\}$, where two ordered pairs are adjacent if they are the same in one coordinate and differ by $1$ in the other coordinate.  That is, $\text{Grid}(2,2)$ is a $2\times 2$ grid graph.  Let $A_1, A_2$, and $A_3$ be disjoint graphs, each isomorphic to $\text{Grid}(2,2)$.  Denote the vertex $(x,y)$ of $A_j$ by $A_j(x,y)$.  We then define a graph $D$ to be the disjoint union of $A_1, A_2$, and $A_3$ modulo the following identifications.  We identify the vertices $A_1(0,2), A_1(1,2), A_1(2,2)$ (as well as the edges between them) of $A_1$ with the vertices $A_2(0,0), A_2(1,0), A_2(2,0)$, respectively, (as well as the edges between them) of $A_2$.  We identify the vertices $A_1(2,0), A_1(2,1), A_1(2,2)$ (as well as the edges between them) of $A_1$ with the vertices $A_3(0,0), A_3(0,1), A_3(0,2)$, respectively, (as well as the edges between them) of $A_3$.  We identify the vertices $A_2(2,0), A_2(2,1), A_2(2,2)$ (as well as the edges between them) of $A_2$ with the vertices $A_3(0,2), A_3(1,2), A_3(2,2)$, respectively, (as well as the edges between them) of $A_3$.  The resulting graph $D$ has $19$ vertices, which we label $D(A_j(x,y))$ for $j=1,2,3$ and $x,y=0,1,2$.

Let $D_1, D_2, D_3$ be disjoint graphs, each isomorphic to $D$.  The graph $B$ is defined to be the disjoint union of $D_1, D_2, D_3$, modulo the following identifications.  We identify the vertices $D_1(A_2(0,2))$, $D_1(A_2(1,2))$, $D_1(A_2(2,2))$, respectively (as well as the edges between them) of $D_1$ with the vertices $D_2(A_3(2,0))$, $D_2(A_3(2,1))$, $D_2(A_3(2,2))$, respectively (as well as the edges between them).  We identify the vertices $D_2(A_2(0,2))$, $D_2(A_2(1,2))$, $D_2(A_2(2,2))$, respectively (as well as the edges between them) of $D_1$ with the vertices $D_3(A_3(2,0))$, $D_3(A_3(2,1))$, $D_3(A_3(2,2))$, respectively (as well as the edges between them).  The graph $B$ is as shown in \cref{fig:building-block}.

\begin{center}
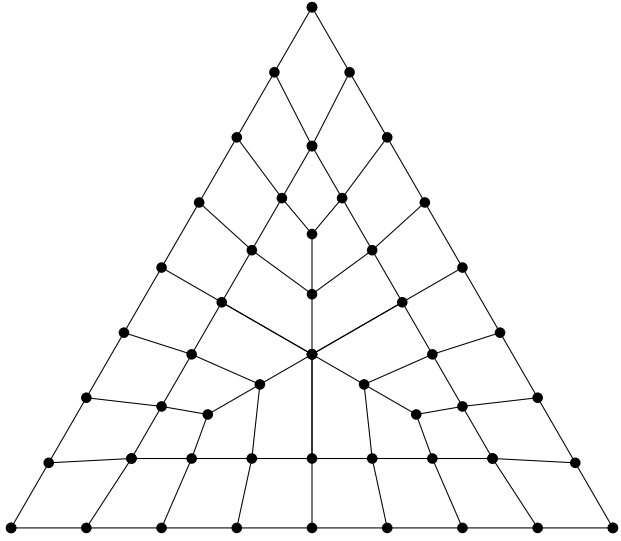
\begin{figure}
\begin{tikzpicture} 

\tikzmath{
\centX = 4;
\centY = 4 / sqrt(3);
\siz = .9;
\sizB = .8;
\sizC = .8;
}

\foreach \i in {0,...,8}{
    \coordinate (A\i) at (\i,0);
    \fill (A\i) circle (2pt);
}
\foreach \i in {0,...,8}{

    \coordinate (B\i) at ({\i * .5}, {\i * sqrt(3)/2});
    \fill (B\i) circle (2pt);
}
\foreach \i in {0,...,8}{
    \coordinate (C\i) at ({8 - \i * .5}, {\i * sqrt(3)/2});
    \fill(C\i) circle (2pt);
}

\foreach \i in {0,...,6}{
    \coordinate (D\i) at ({\i * \sizB + \centX - \sizB * 3}, {0 * \siz + \centY - 6/(2*sqrt(3)) * \sizB});
    \fill (D\i) circle (2pt);
}
\foreach \i in {0,...,6}{
    \coordinate (E\i) at ({\i * .5 * \sizB + \centX - \sizB * 3}, {\i * sqrt(3)/2 * \sizB + \centY - 6/(2*sqrt(3)) * \sizB});
    \fill (E\i) circle (2pt);    
}
\foreach \i in {0,...,6}{
    \coordinate (F\i) at ({-\i * .5 * \sizB + \centX + \sizB * 3}, {\i * sqrt(3)/2 * \sizB + \centY - 6/(2*sqrt(3)) * \sizB});
    \fill (F\i) circle (2pt);    
}

\foreach \i in {0,...,2}{
    \coordinate (G\i) at ({\centX}, {\i * \sizC + \centY});
    \fill (G\i) circle (2pt);
}
\foreach \i in {0,...,2}{
    \coordinate (H\i) at ({-\i * sqrt(3) / 2 * \sizC + \centX}, {-\i * .5 * \sizC + \centY});
    \fill (H\i) circle (2pt);}

\foreach \i in {0,...,2}{
    \coordinate (I\i) at ({\i * sqrt(3) / 2 * \sizC + \centX}, {-\i * .5 * \sizC + \centY});
    \fill (I\i) circle (2pt);
}

\foreach \i in {0,...,7}{
    \pgfmathtruncatemacro{\j}{\i+1}
    \draw (A\i) -- (A\j);
    \draw (B\i) -- (B\j);
    \draw (C\i) -- (C\j);
}
\foreach \i in {0,...,5}{
    \pgfmathtruncatemacro{\j}{\i+1}
    \draw (D\i) -- (D\j);
    \draw (E\i) -- (E\j);
    \draw (F\i) -- (F\j);
}
\foreach \i in {0,...,1}{
    \pgfmathtruncatemacro{\j}{\i+1}
    \draw (G\i) -- (G\j);
    \draw (H\i) -- (H\j);
    \draw (I\i) -- (I\j);
}

\foreach \i in {0,...,6}{
    \pgfmathtruncatemacro{\j}{\i+1}
    \draw (D\i) -- (A\j);
    \draw (E\i) -- (B\j);
    \draw (F\i) -- (C\j);
}

\foreach \i in {0,...,2}{
    \pgfmathtruncatemacro{\j}{\i+3}
    \draw (G\i) -- (E\j);
    \draw (G\i) -- (F\j);
}
\foreach \i in {0,...,2}{
    \pgfmathtruncatemacro{\j}{3 - \i}
    \draw (H\i) -- (D\j);
    \draw (H\i) -- (E\j);
}
\foreach \i in {0,...,2}{
    \pgfmathtruncatemacro{\j}{\i+3}
    \pgfmathtruncatemacro{\k}{3 - \i}
    \draw (I\i) -- (D\j);
    \draw (I\i) -- (F\k);
}

\end{tikzpicture}\caption{The basic ``building block'' graph $B$ for $\tilde{X}$}\label{fig:building-block}
\end{figure}
\end{center}

The key features of $B$ are that it is composed of $4$-cycles in such a way that $\pi_1^2(B)=1$; that it has three ``corners''; and that there is no walk of length $1$, $2$, or $4$ from a corner to any other corner.  Intuitively, we think of $B$ itself as being like a $2$-simplex (with the three corners as its vertices), and out of many copies of this ``$2$-simplex'' we build a graph that behaves something like a simplicial complex with an appropriate action of the group $G$.

For every triple $(g_1, g_2, g_3)$ of \emph{distinct} elements of $G$, let $B(g_1, g_2, g_3)$ be a copy of $B$ with $D_1$ denoted as $D_{12}(g_1, g_2, g_3)$ and $D_2$ denoted as $D_{23}(g_1, g_2, g_3)$ and $D_3$ denoted as $D_{31}(g_1, g_2, g_3)$.  Let $Y$ be the disjoint union of $B(g_1, g_2, g_3)$ over all such triples.  We form the graph $\tilde{X}$ by taking $Y$ modulo certain identifications that we now specify.

First, for all triples $g_1,g_2,g_3$ of distinct elements of $G$, we identify the entire graph $B(g_1, g_2, g_3)$ with $B(g_2, g_3, g_1)$ by identifying $D_{12}(g_1, g_2, g_3)$ with $D_{31}(g_2, g_3, g_1)$, and $D_{23}(g_1, g_2, g_3)$ with $D_{12}(g_2, g_3, g_1)$, and $D_{31}(g_1, g_2, g_3)$ with $D_{23}(g_2, g_3, g_1)$.

Next, suppose that $g_1, g_2, g_3, g_4$ are distinct elements of $G$.

Consider the walk $w(g_1,g_2)$ of length $8$ in $B(g_1, g_2, g_3)$ given by this sequence of vertices:

$D_{12}(g_1, g_2, g_3)(A_1(0,0))$

$D_{12}(g_1, g_2, g_3)(A_1(0,1))$

$D_{12}(g_1, g_2, g_3)(A_1(0,2))$

$D_{12}(g_1, g_2, g_3)(A_2(0,1))$

$D_{12}(g_1, g_2, g_3)(A_2(0,2))$

$D_{23}(g_1, g_2, g_3)(A_3(0,2))$

$D_{23}(g_1, g_2, g_3)(A_3(0,1))$

$D_{23}(g_1, g_2, g_3)(A_3(0,0))$

$D_{23}(g_1, g_2, g_3)(A_1(0,1))$

$D_{23}(g_1, g_2, g_3)(A_1(0,0))$

(Referring to \cref{fig:building-block}, we may think of this as the walk of length $8$ from the lower-left ``corner'' to the upper ``corner.'')  We identify that walk in $B(g_1,g_2,g_3)$, and the edges between those vertices, respectively, with the corresponding walk in $B(g_1,g_2,g_4)$, as well as the edges between those vertices, obtained by replacing all instances of $g_3$ with $g_4$.  If a vertex $v$ of $B(g_1,g_2,g_3)$ appears in such a walk $w$, we say that $v$ is in the \emph{fringe} of $B(g_1,g_2,g_3)$.  (Referring to \cref{fig:building-block}, the fringe of $B(g_1,g_2,g_3)$ is set of vertices appearing in the the outer triangle.  Because of the various identifications being made, the vertices in the fringe of are precisely those occurring in all three possible length-8 corner-to-corner walks in $B(g_1,g_2,g_3)$.)

Finally, suppose that $g_1, g_2, g_3, g_4, g_5$ are distinct elements of $G$.  We identify the vertex $D_{12}(g_1, g_2, g_3)(A_1(0,0))$ of $B(g_1, g_2, g_3)$ with the vertex $D_{12}(g_1, g_4, g_5)(A_1(0,0))$ of $B(g_1, g_4, g_5)$.

We define a group action of $G$ on the vertex set of $\tilde{X}$ by $g\cdot D_{jk}(g_1,g_2,g_3)(A_\ell(x,y))=D_{jk}(gg_1,gg_2,gg_3)(A_\ell(x,y))$.  One can show that this does in fact give us a well-defined group action.

We claim that with this group action, the graph $\tilde{X}$ so formed has the properties required to apply \cref{lem:group-action}.  We now prove this claim.  Towards this end, it is handy to introduce the notational convenience of denoting the vertex $D_{12}(g_1, g_2, g_3)(A_1(0,0))$ of $B(g_1, g_2, g_3)$ simply by $g_1$.  We refer to such a vertex as a ``corner'' of $B(g_1, g_2, g_3)$.  Because of the various identifications being made, we have that $B(g_1, g_2, g_3)$ has precisely three corners, namely, $g_1, g_2, g_3$.

First, we will show that $\tilde{X}$ is connected.  Because each graph $B(g_1,g_2,g_3)$ is connected, it suffices to show that there is a walk in $\tilde{X}$ from $a$ to $b$ for any two distinct elements $a,b\in G$.  By choosing $c\in G\setminus\{a,b\}$, it is clear that such a walk exists, because $B(a,b,c)$ is connected.

Next we show that $\pi^{2}_1(\tilde{X},e)=1$, where $e$ is the identity element of $G$.  Let $w$ be a closed walk in $\tilde{X}$ based at $e$.  We will show that $w$ is homotopy equivalent to the trivial walk.  First, by performing repeated substitutions (taking advantage of the various $4$-cycles in $B$), observe that $w$ is homotopy equivalent to a walk in which every vertex is in the fringe of some graph $B(g_1,g_2,g_3)$.  By performing deletions to eliminate all backtracking, we may then assume that $w$ is a concatenation of the walks $w(g_1,g_2)$ described earlier.  That is:\[w=w(g_0,g_1)*\cdots*w(g_{n-1},g_n)\]with $g_0=e=g_n$ and $g_i\neq g_{i+1}$ for all $i$.  Choose $n$ to be minimal amongst all such representations of $w$, up to homotopy equivalence.  We will show that we cannot have $n>0$, and thus that $w$ is homotopy equivalent to a trivial walk.  Suppose to the contrary that $n>0$.

If $g_i$, $g_{i+1}$, and $g_{i+2}$ are all distinct for some $i$, then by performing repeated substitutions in $B(g_i,g_{i+1},g_{i+2})$, we find that $w(g_i,g_{i+1})*w(g_{i+1},g_{i+2})$ is homotopy equivalent to $w(g_i,g_{i+2})$.  But this would reduce by $1$ the number of occurences of walks $w(a,b)$ in the expression for $w$, violating the minimality of $n$.  So this cannot occur.

We cannot have $n=1$, because $g_0\neq g_1$.  So $n\geq 2$, and $g_2=g_0$.  We claim that $w(g_0,g_1)*w(g_1,g_0)$ is homotopy equivalent to a trivial walk, once again violating minimality of $n$.  To see this, let $a,b$ be elements of $G$ distinct from $g_0, g_1$.  (Here is where we use that $|G|\geq 4$.)  From our previous argument, we see that $w(g_0,g_1)$ is homotopy equivalent to $w(g_0,a)*w(a,g_1)$ and that $w(g_1,g_0)$ is homotopy equivalent to $w(g_1,b)*w(b,g_0)$.  But then $w(g_0,g_1)*w(g_1,g_0)$ is homotopy equivalent to $w(g_0,a)*w(a,g_1)*w(g_1,b)*w(b,g_0)$, which in turn is homotopy equivalent to $w(g_0,a)*w(a,b)*w(b,g_0)$, which in turn is homotopy equivalent to a trivial walk, because $\pi_1^2(B(g_0,a,b))=1$.

This shows that $\pi^{2}_1(\tilde{X},e)=1$.

Finally, it is straightforward to show that $G$ acts freely on $\tilde{V}$ (the vertex set of $\tilde{X}$) by graph isomorphisms and that for all vertices $\tilde{v}\in\tilde{V}$ and all elements $g\in G\setminus\{e\}$, we have that there is no walk of length $1$, $2$ or $4$ from $\tilde{v}$ to $g\cdot \tilde{v}$.

Let $X$ be the quotient of $\tilde{X}$ by the action of $G$.  By \cref{lem:group-action}, we have that $\pi_1^2(X)\cong G$.

The special cases where $G=1$ or $G\cong\mathbb{Z}/2\mathbb{Z}$ or $G\cong\mathbb{Z}/3\mathbb{Z}$ can now be dealt with easily.  Simply embed $G$ as a subgroup of a larger group $H$ with $|H|\geq 4$, and construct $\tilde{X}$ as before.  Being a subgroup of $H$, we have that $G$ acts on $\tilde{X}$.  So we obtain the desired result by taking the quotient of $\tilde{X}$ by the action of $G$.\end{proof}

The articles \cite{barcelo} and \cite{kapulkin} deal with a different notion of discrete fundamental group of a graph $X$, denoted $A_1(X)$.  This is the fundamental group of the cellular complex formed by attaching a $2$-cell for each $3$-cycle and for each $4$-cycle.  In \cite{kapulkin} it is shown that for any group $G$, there exists a graph $X$ such that $A_1(X)\cong G$.  The graph $\tilde{X}$ in the proof of \cref{thm:arbitrary-group} contains no $3$-cycles.  Consequently, the proof of \cref{thm:arbitrary-group} applies equally well to $A_1$.  That is, for an arbitrary group $G$, there exists a graph $X$ such that $A_1(X)\cong G$, as previously shown in \cite{kapulkin}.

\bibliographystyle{amsplain}
\bibliography{SvK}

\end{document}